\newtheorem{thm}{Theorem}
\newtheorem{defn}{Definition}
\newtheorem{pro}{Proposition}
\newtheorem{rk}{Remark}
\numberwithin{equation}{section} \setcounter{tocdepth}{1}
\newcommand{\M}{{\mathcal M}}
\newcommand{\bea}{\begin{eqnarray}}
\newcommand{\eea}{\end{eqnarray}}
\newcommand{\R}{\mathbb{R}}
\def\M{\mathcal M}
\def\R{\mathbb{R}}
\begin{document}
\title [Construction of flows of finite-dimensional algebras]
{Construction of flows of finite-dimensional algebras}

\author{M. Ladra,  U.A. Rozikov}

\address{M.\ Ladra\\ Department of Mathematics, University of Santiago de Compostela, 15782, Spain.}
 \email {manuel.ladra@usc.es}

 \address{U.\ A.\ Rozikov \\ Institute of mathematics, 29, Do'rmon Yo'li str., 100125,
Tashkent, Uzbekistan.} \email {rozikovu@yandex.ru}

\begin{abstract} Recently, we introduced the
notion of flow (depending on time) of finite-dimensional
algebras. A flow of algebras (FA) is a particular case of a continuous-time dynamical system
whose states are finite-dimensional algebras with (cubic) matrices of
structural constants satisfying an analogue
of  the Kolmogorov-Chapman equation (KCE).
Since there are several kinds of multiplications between cubic matrices one has fix a multiplication
first and then consider the KCE with respect to the fixed multiplication.
The existence of a solution for the KCE provides the existence of an FA.
In this paper our aim is to find sufficient conditions on
the multiplications under which the corresponding KCE has a solution.
Mainly our conditions  are given on the algebra of cubic matrices (ACM) considered
with respect to a fixed multiplication of cubic matrices.
Under some assumptions on the  ACM (e.g. power associative, unital, associative, commutative) we describe
a wide class of FAs, which contain algebras of arbitrary finite dimension.
In particular,  adapting the theory of continuous-time Markov processes,
 we construct a class of FAs given by  the matrix exponent of cubic matrices.
 Moreover, we remarkably extend the set of FAs given with respect to the
 Maksimov's multiplications of our paper \cite{LR}.
For several FAs we study the time-dependent behavior (dynamics) of the algebras.
We derive a system of differential equations for FAs.
\end{abstract}

\subjclass[2010] {17D92; 17D99; 60J27}

\keywords{Finite-dimensional algebra;  cubic matrix; time; Kolmogorov-Chapman
equation; commutative; associative.}

\maketitle

\section{Introduction}

 It is known that (see e.g. \cite{SK}) if each element of a
 family (depending on time) of matrices satisfying
the Kolmogorov-Chapman  equation (KCE) is stochastic,
then it generates a Markov process.
But what kind of process or dynamical systems can be generated by a family of
non-stochastic matrices satisfying KCE? Depending on the matrices, it can be a
non-Markov process \cite{ht}, a deformation \cite{O}, etc.
Other motivations of consideration of non-stochastic solutions of KCE are given
in recent papers \cite{CLR,Mu,ORT,RM,RM1}. These papers devoted to study some chains of evolution algebras.
In each of these papers the matrices of structural constants (time-dependent on  the pair $(s, t)$) are square or rectangular and satisfy the
KCE. In other words, a chain of evolution algebras is a continuous-time dynamical
system which in any fixed time is an evolution algebra.

In \cite{DR}, \cite{LR13}
some cubic stochastic matrices
are used (as matrices of structural constants)
to investigate algebras and dynamical systems of bisexual populations.

In \cite{LR} we generalized the notion of chain of evolution algebras (given for algebras with \emph{rectangular} matrices)
to a notion of flow of arbitrary finite-dimensional algebras (i.e.
their matrices of structural constants are \emph{cubic} matrices).
In this paper we continue our investigations of flows of algebras (FAs).

The paper is organized as follows. In Section~\ref{S:definitions} we give the main definitions related to
algebras of cubic matrices, several kinds of multiplications of cubic matrices and FAs.
Note that an FA is defined by a family (depending on time) of cubic matrices of structural
 constants, which satisfy an analogue of KCE. Since there are several types of multiplication of cubic matrices,
one has to fix a multiplication, say $\mu$, first and then consider the KCE
with respect to this multiplication. In Section~\ref{Se2} we find some conditions on $\mu$ under which the
KCE has at least one solution. For several multiplications we give a wide class of solution of KCE, i.e. a wide class
of FAs. For the multiplications of Maksimov \cite{Mak} we extend the class of FAs given in \cite{LR}. Section~\ref{Sd}
contains some differential equations for FAs.
For several FAs we study the time-dependent behavior of the flows.

\section{Definitions}\label{S:definitions}

\subsection{Algebras of cubic matrices} Given a field $F$, any finite-dimensional algebra $\mathcal A$ can be specified up
to isomorphism by giving its dimension (say $m$), and specifying $m^3$ structural
constants $c_{ijk}$, which are scalars in $F$.
These structure constants determine the multiplication in $\mathcal A$ via the following rule:
\[{e}_{i} {e}_{j} = \sum_{k=1}^m c_{ijk} {e}_{k},\]
where $e_1,\dots,e_m$ form a basis of $\mathcal A$.
Thus the multiplication of a finite-dimensional algebra is given by a cubic matrix $(c_{ijk})$.

A cubic matrix $Q=(q_{ijk})_{i,j,k=1}^m$ is a $m^3$-dimensional vector which can be uniquely written as
\[Q=\sum_{i,j,k=1}^m   q_{ijk}E_{ijk},\]
where $E_{ijk}$ denotes the cubic unit (basis) matrix, i.e. $E_{ijk}$ is a $m^3$- cubic matrix whose
$(i,j,k)$th entry is equal to 1 and all other entries are equal to 0.

Denote by $\mathfrak C$ the set of all cubic matrices over a field $F$. Then $\mathfrak C$ is an
$m^3$-dimensional vector space over $F$, i.e. for any matrices $A=(a_{ijk}), B=(b_{ijk})\in \mathfrak C$, $\lambda\in F$, we have
\[ A+B \coloneqq (a_{ijk}+b_{ijk})\in \mathfrak C, \qquad  \lambda A \coloneqq (\lambda a_{ijk})\in \mathfrak C. \]
In general, one can fix an $m^3\times m^3\times m^3$- cubic matrix $\mu=\left(C_{ijk,lnr}^{uvw}\right)$ as a matrix of structural constants and   give a \emph{multiplication} of basis cubic matrices as
\begin{equation}\label{uk}
E_{ijk}*_\mu E_{lnr}=\sum_{uvw}C_{ijk,lnr}^{uvw}E_{uvw}.
\end{equation}
Then the extension of this multiplication by bilinearity to arbitrary cubic matrices gives a general
multiplication on the set $\mathfrak C$ and it becomes an algebra of cubic matrices (ACM), denoted by $\mathfrak C_\mu$.
Under known conditions (see \cite{Ja}) on structural constants one can make this general ACM as a commutative or/and associative algebra, etc.

\subsection{Maksimov's multiplications} Here we introduce some simple versions of multiplications \eqref{uk}.
Denote $I=\{1,2,\dots,m\}$.
Following  \cite{Mak} define the following multiplications for basis matrices $E_{ijk}$:
\begin{equation}\label{ma}
 E_{ijk}*_a E_{lnr}=\delta_{kl}E_{ia(j,n)r},
 \end{equation}
  where $a \colon   I\times I\to I$, $ (j,n) \mapsto a(j,n) \in I$,  is an arbitrary associative binary operation and
  $\delta_{kl}$ is the Kronecker symbol.

  Denote by $\mathcal O_m$ the set of all associative binary operations on $I$.

 The general formula for the multiplication is the extension of \eqref{ma} by bilinearity, i.e.
 for any two cubic matrices $A=(a_{ijk}), B=(b_{ijk})\in \mathfrak C$
 the matrix $A*_a B=(c_{ijk})$ is defined by
 \begin{equation}\label{AB}
 c_{ijr}=\sum_{l,n: \, a(l,n)=j}\sum_k a_{ilk}b_{knr}.
 \end{equation}

 Denote by $\mathfrak C_a\equiv \mathfrak C_a^m=(\mathfrak C, *_a) $, $a\in\mathcal O_m$,
 the ACM given by the multiplication $*_a$.

\subsection{Flow of algebras}
Following \cite{LR} we define a notion of flow of algebras (FA).
Consider a family $\left\{\mathcal A^{[s,t]}:\ s,t \in \R,\ 0\leq s\leq t
\right\}$ of arbitrary $m$-dimensional algebras over the field $F$,
with basis $e_1,\dots,e_m$ and multiplication table
\[  e_ie_j =\sum_{k=1}^mc_{ijk}^{[s,t]}e_k, \ \ i,j=1,\dots,m. \]
 Here parameters $s,t$ are
considered as time.

Denote by
$\M^{[s,t]}=\left(c_{ijk}^{[s,t]}\right)_{i,j,k=1,\dots,m}$ the matrix
of structural constants of $\mathcal A^{[s,t]}$.

\begin{defn}
 Fix an arbitrary multiplication (not necessarily the Maksimov's multiplication) of cubic matrices, say $*_\mu$.

 A family $\left\{\mathcal A^{[s,t]}:\ s,t \in \R,\ 0\leq s\leq t
\right\}$ of $m$-dimensional algebras over the field $F$
is called an FA of type $\mu$ if the matrices
$\M^{[s,t]}$ of structural constants satisfy the
Kolmogorov-Chapman equation (for cubic matrices):
\begin{equation}\label{KC}
\M^{[s,t]}=\M^{[s,\tau]}*_\mu\M^{[\tau,t]}, \qquad \text{for all} \ \ 0\leq s<\tau<t.
\end{equation}
\end{defn}

See \cite{LR} for useful remarks and several examples of FAs.

\begin{defn}
 An FA is called a (time) homogenous FA if
the matrix $\M^{[s,t]}$ depends only on $t-s$. In this case we write
$\M^{[t-s]}$.
\end{defn}
\begin{defn}
 An FA is called a discrete-time FA if
the matrix $\M^{[s,t]}$ depends only on $t,s\in \mathbb N$. In this case we write
$\M^{[n,m]}$, $n,m\in \mathbb N$.
\end{defn}
To construct an FA of type $\mu$ one has to solve \eqref{KC}. In this paper our aim is to construct FAs,
i.e. find solutions to the equation \eqref{KC}.

\section{Constructions and time dynamics of FAs}\label{Se2}

\subsection{Power-associative multiplications} Recall that an algebra is called \emph{power-associative} if for each element $a$
the subalgebra generated by $a$ is associative, that is $a^na^m=a^{m+n}$,
for all nonnegative integers $n, m$.

Fix an arbitrary multiplication rule between cubic matrices, say $*_\mu$.

For a cubic matrix $Q$ denote
\[Q^{*_\mu n}=\underbrace{Q*_\mu Q*_\mu\dots *_\mu Q}_{n \ \ \text{times}}, \qquad  n=1,2,\dots.\]

\textsl{Condition 1}. Assume that the algebra $\mathfrak C_\mu$ of cubic matrices is power-associative.

\begin{pro}\hfill
 \begin{itemize}
\item[1.] If Condition 1 is satisfied
for  the multiplication $*_\mu$ then  $\M^{[n,m]}=Q^{*_\mu (m-n)}$ generates a discrete time FA of type $\mu$.
\item[2.] If  ACM $\mathfrak C_\mu$ has an idempotent, i.e. there exists $\mathcal I\in \mathfrak C_\mu$
such that $\mathcal I*_\mu \mathcal I=\mathcal I$ then  $\M^{[s,t]}=\mathcal I$, for all $0\leq s<t$, generates an FA of type $\mu$.
\end{itemize}
\end{pro}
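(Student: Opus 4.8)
The plan is to verify directly that each proposed family of cubic matrices satisfies the Kolmogorov-Chapman equation \eqref{KC} for the fixed multiplication $*_\mu$; since an FA of type $\mu$ is \emph{defined} by that equation (together with the trivial requirement that each $\M^{[s,t]}$ is a genuine $m\times m\times m$ cubic matrix, which is automatic here because $Q\in\mathfrak C$ and $\mathfrak C$ is closed under $*_\mu$), there is nothing else to check.

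For part~1, I would fix $Q\in\mathfrak C_\mu$ and set $\M^{[n,m]}=Q^{*_\mu(m-n)}$ for integers $0\le n\le m$ (with the convention that the empty product for $m=n$ is handled separately, or one simply restricts to $n<m$ as in \eqref{KC}). Given $n<\tau<m$ in $\mathbb N$, the right-hand side of \eqref{KC} is $Q^{*_\mu(\tau-n)}*_\mu Q^{*_\mu(m-\tau)}$, and the claim is that this equals $Q^{*_\mu(m-n)}$. This is precisely the identity $a^{p}a^{q}=a^{p+q}$ for $p=\tau-n\ge 1$ and $q=m-\tau\ge 1$, which holds in any power-associative algebra because the subalgebra generated by the single element $Q$ is associative; Condition~1 guarantees $\mathfrak C_\mu$ is power-associative, so this applies to $Q$. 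Hence $\M^{[n,m]}=\M^{[n,\tau]}*_\mu\M^{[\tau,m]}$, and the family is a discrete-time FA of type $\mu$.

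For part~2, I would simply set $\M^{[s,t]}=\mathcal I$ for all $0\le s<t$, where $\mathcal I*_\mu\mathcal I=\mathcal I$. Then for any $0\le s<\tau<t$ we have $\M^{[s,\tau]}*_\mu\M^{[\tau,t]}=\mathcal I*_\mu\mathcal I=\mathcal I=\M^{[s,t]}$, so \eqref{KC} holds and the constant family is an FA of type $\mu$. Note this needs no associativity assumption at all, only the existence of the idempotent.

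There is essentially no hard obstacle here: the statement is a direct unwinding of the definitions, and the only subtlety worth flagging is the bookkeeping of exponents — making sure $\tau-n$ and $m-\tau$ are strictly positive so that the power-associativity identity $Q^{*_\mu p}*_\mu Q^{*_\mu q}=Q^{*_\mu(p+q)}$ is being invoked in the range where it is defined, and (for part~1) being explicit about whether $\M^{[n,n]}$ is defined and, if so, whether $\mathfrak C_\mu$ needs a unit for $Q^{*_\mu 0}$ to make sense. Since \eqref{KC} is only imposed for $s<\tau<t$, I would keep the formulation for $n<m$ and not belabor the diagonal case.
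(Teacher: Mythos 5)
Your proof is correct and follows essentially the same route as the paper: part 1 reduces the Kolmogorov--Chapman equation to the identity $Q^{*_\mu p}*_\mu Q^{*_\mu q}=Q^{*_\mu(p+q)}$ guaranteed by power-associativity, and part 2 is the immediate verification with the idempotent. Your extra remarks on the positivity of the exponents and the diagonal case are sound bookkeeping that the paper leaves implicit.
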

\begin{proof}
1. Using power-associativity of the ACM
for any natural numbers $n<k<m$ we get
\[Q^{*_\mu (m-n)}=Q^{*_\mu (k-n)}*_\mu Q^{*_\mu (m-k)}.\]
Thus $\M^{[n,m]}=Q^{*_\mu (m-n)}$ is a solution to \eqref{KC} and therefore it
generates a discrete-time FA of type $\mu$, denote it by $\mathcal A^{[n,m]}_1$.

2.  Straightforward. Denote the corresponding FA by $\mathcal A^{[s,t]}_2$.
\end{proof}

\textsl{Remarks on time dynamics}.  The time dynamics of $\mathcal A_1^{[n,m]}$ depends on the fixed matrix $Q$.
This FA is a time homogenous; it can be periodic iff the powers (with respect to multiplication $\mu$) of
the cubic matrix $Q$ have periods, i.e. if there is $p\in \mathbb{N}$ such that $ Q^{*_\mu p}=Q$; if $Q$ is such that
$\lim_{n\to \infty} Q^{*_\mu n}=\tilde Q$ exists then the FA has a limit algebra:
\[\tilde{\mathcal A}_1=\lim_{m-n\to\infty} \mathcal A_1^{[n,m]}.\]

The time dynamics of $\mathcal A_2^{[s,t]}$ is trivial: it does not depend on time. \\

\subsection{Exponential solutions}
In the theory of continuous-time Markov chains, the matrix exponent of square matrices plays a crucial role (see e.g. \cite[Chapter 3]{BB}, \cite[Chapter 2]{SK}).
Here we adapt this notion of matrix exponent for cubic matrices.

 Recall that an algebra is \emph{unital} if it has an
element $u$ with $ux = x = xu$ for all $x$ in the algebra. The element $u$ is called \emph{unit} element.

\textsl{Condition 2}. Assume $*_\mu$ is such that the  corresponding ACM,
$\mathfrak C_\mu=(\mathfrak C, *_\mu)$ is unital, i.e. it has a unit matrix denoted by ${\mathbb I}$.\\

In the unital ACM $\mathfrak C_\mu$ for each cubic matrix $Q$ we define $Q^{*_\mu 0}={\mathbb I}$.

A \emph{matrix exponent} $\exp_\mu(tQ)$ is defined by
\begin{equation}\label{exp}
\exp_\mu(tQ)={\mathbb I}+\sum_{n\geq 1}\frac{(tQ)^{*_\mu n}}{n!}=\sum_{n\geq 0}\frac{(tQ)^{*_\mu n}}{n!}, \ \ \text{i.e.} \ \
 \big(\exp_\mu(tQ)\big)_{ijk}=\sum_{n\geq 0}\frac{t^n(Q^{*_\mu n})_{ijk}}{n!}.
\end{equation}
For a  cubic matrix $Q$, the parameter $t$ in \eqref{exp} can be any real number. In our paper we consider the case $t\geq 0$.

\textsl{Condition 3}. Assume that $\mathfrak C_\mu$ is a normed space with norm $\lVert \cdot\rVert$ such that for any
two cubic matrices $A$, $B$, we have
 \begin{equation}\label{ne}
 \lVert A*_\mu B\rVert \leq\lVert A\rVert \lVert B\rVert.
 \end{equation}
 
\textsl{Condition 4}. Assume $\mathfrak C_\mu$ is an associative algebra.

\begin{pro} Assume Conditions 1--4 are satisfied. Let $Q$ be a cubic matrix. Then
\begin{itemize}
\item[(a)] The series in \eqref{exp} converges.

\item[(b)]  The function $\exp_\mu(tQ)$ is differentiable with respect to $t$ with
\[\frac{d}{dt}\exp_\mu(tQ)=Q*_\mu \exp_\mu(tQ)=\exp_\mu(tQ)*_\mu Q, \ \ t\in \mathbb R.\]

\item[(c)] The semigroup property:
\begin{equation}\label{ets}
\exp_\mu((s+t)Q)=\exp_\mu(sQ)*_\mu\exp_\mu(tQ), \ \ \text{for all} \ s,t\in \mathbb R.
\end{equation}

\item[(d)] The function $\exp_\mu(tQ)$ is the only solution to
\begin{equation}\label{Y}
\frac{d}{dt}Y(t)=Y(t)*_\mu Q=Q*_\mu Y(t), \ \ \text{with} \ \ Y(0)={\mathbb I}.
\end{equation}
These are called the Kolmogorov's forward and backward equations.
\end{itemize}
\end{pro}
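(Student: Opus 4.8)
The plan is to transcribe the classical theory of the matrix exponential into the ACM setting $\mathfrak C_\mu$, keeping careful track of where each of Conditions~1--4 is used. Two preliminary remarks will be invoked throughout. First, since $\mathfrak C_\mu$ is finite-dimensional it is complete for the norm of Condition~3, and the multiplication $*_\mu$ is continuous (it is bilinear on a finite-dimensional space, and in any case $\lVert A*_\mu B\rVert\le\lVert A\rVert\,\lVert B\rVert$ by \eqref{ne}). Second, Condition~4 (associativity) already implies Condition~1 (power-associativity), so $Q^{*_\mu n}$ is unambiguous and $Q^{*_\mu j}*_\mu Q^{*_\mu k}=Q^{*_\mu(j+k)}$; in particular $Q$ commutes with each of its own powers.

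For (a): from \eqref{ne} one gets $\lVert (tQ)^{*_\mu n}/n!\rVert\le(|t|\,\lVert Q\rVert)^n/n!$, so the series \eqref{exp} is dominated by $\sum_{n\ge0}(|t|\,\lVert Q\rVert)^n/n!=e^{|t|\lVert Q\rVert}<\infty$; hence it converges absolutely, and uniformly for $t$ in any bounded interval, with $\lVert\exp_\mu(tQ)\rVert\le e^{|t|\lVert Q\rVert}$. For (b): the partial sums $S_N(t)=\sum_{n=0}^N t^nQ^{*_\mu n}/n!$ are polynomial in $t$, and associativity lets me factor one $Q$ out of either end of $Q^{*_\mu n}$, so $S_N'(t)=Q*_\mu S_{N-1}(t)=S_{N-1}(t)*_\mu Q$. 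By the uniform bound from (a) and the continuity of $*_\mu$, both $S_N$ and $S_N'$ converge uniformly on bounded intervals; this justifies differentiating \eqref{exp} term by term and gives the stated formula (and, as a by-product, that $Q$ commutes with $\exp_\mu(tQ)$). For (c): I would multiply the two absolutely convergent series, regroup by the Cauchy product (legitimate by absolute convergence), use $Q^{*_\mu j}*_\mu Q^{*_\mu k}=Q^{*_\mu(j+k)}$ and the binomial identity $\sum_{j+k=n}s^jt^k/(j!\,k!)=(s+t)^n/n!$, and recognize the right-hand side as $\exp_\mu((s+t)Q)$; in particular $\exp_\mu(tQ)*_\mu\exp_\mu(-tQ)=\exp_\mu(0)=\mathbb I$ by Condition~2. (Alternatively (c) follows from (b) and (d) by a uniqueness argument, but the direct computation is cleaner.)

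For (d): that $\exp_\mu(tQ)$ solves \eqref{Y} is exactly (b) together with $\exp_\mu(0)=\mathbb I$. For uniqueness, given any solution $Y$, I set $W(t)=\exp_\mu(-tQ)*_\mu Y(t)$ and differentiate using the (bilinear, continuous) product rule, part~(b) applied to $-Q$, the equation \eqref{Y}, and associativity:
\[W'(t)=-\,Q*_\mu\exp_\mu(-tQ)*_\mu Y(t)+\exp_\mu(-tQ)*_\mu Q*_\mu Y(t)=0,\]
since $Q*_\mu\exp_\mu(-tQ)=\exp_\mu(-tQ)*_\mu Q$ and the two triple products then agree after regrouping. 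Hence $W\equiv W(0)=\mathbb I$, and using (c) and associativity, $Y(t)=\bigl(\exp_\mu(tQ)*_\mu\exp_\mu(-tQ)\bigr)*_\mu Y(t)=\exp_\mu(tQ)*_\mu W(t)=\exp_\mu(tQ)$.

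I do not expect a serious obstacle: this is the classical argument, and the effort is bookkeeping. The points that genuinely require care are justifying the term-by-term differentiation in (b) (uniform convergence on bounded $t$-intervals together with continuity of $*_\mu$, both supplied by \eqref{ne} and finite-dimensionality); the repeated use of associativity (Condition~4) to rearrange triple products and, in (d), to slide $Q$ past $\exp_\mu(-tQ)$; and noticing that the commutation $Q*_\mu\exp_\mu(tQ)=\exp_\mu(tQ)*_\mu Q$ needed in (d) is precisely what part~(b) provides. It is also worth remarking that Condition~1 is subsumed by Condition~4, so power-associativity enters only implicitly.
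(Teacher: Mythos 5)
Your proposal is correct and follows essentially the same route as the paper's proof: the norm bound $\lVert (tQ)^{*_\mu n}/n!\rVert\le(|t|\lVert Q\rVert)^n/n!$ for (a), term-by-term differentiation for (b), the Cauchy product plus the binomial identity for (c), and for (d) showing that the product of a putative solution with $\exp_\mu(-tQ)$ has zero derivative, hence equals $\mathbb I$. The only differences are cosmetic: you justify the term-by-term differentiation via uniform convergence (which the paper takes for granted) and you observe that Condition~1 is subsumed by Condition~4.
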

\begin{proof}
(a)
 Using properties of the norm and \eqref{ne} we get
\[ \lVert\exp_\mu(tQ)\rVert=\big\lVert\sum_{n\geq 0}\frac{(tQ)^{*_\mu n}}{n!} \big\rVert \leq \sum_{n\geq 0}\frac{\lvert t \rvert ^k\lVert Q\rVert ^n}{n!}=\exp(\lvert t\rvert \, \lVert Q\rVert)<+\infty. \]

(b)  Write
\[\frac{d}{dt}\exp_\mu(tQ)=\frac{d}{dt}\sum_{n\geq 0}\frac{(tQ)^{*_\mu n}}{n!}=\sum_{n\geq 1}\frac{t^{n-1}Q^{*_\mu n}}{(n-1)!}
=Q*_\mu\sum_{n\geq 1}\frac{(tQ)^{*_\mu {n-1}}}{(n-1)!}   =Q*_\mu \exp_\mu(tQ).\]
It is obvious that
\[Q*_\mu \exp_\mu(tQ)= Q*_\mu\sum_{n\geq 1}\frac{(tQ)^{*_\mu {n-1}}}{(n-1)!}=\sum_{n\geq 1}\frac{(tQ)^{*_\mu {n-1}}}{(n-1)!}*_\mu Q=  \exp_\mu(tQ)*_\mu Q.\]

(c) From \eqref{exp} we have
\begin{align*}
\exp_\mu(sQ)*_\mu \exp_\mu(tQ) &=\left(\sum_{n\geq 0}\frac{(sQ)^{*_\mu n}}{n!}\right)*_\mu \left(\sum_{k\geq 0}\frac{(tQ)^{*_\mu k}}{k!}\right)\\
{}&=\sum_{n\geq 0}\sum_{k\geq 0}\frac{Q^{*_\mu (n+k)}s^nt^k}{n! \, k!}.
\end{align*}
Denoting $j=n+k$ from the last equality, we get
  \begin{align*}
  \exp_\mu(sQ)*_\mu \exp_\mu(tQ) &=\sum_{j\geq 0}\sum_{k\geq 0}\frac{Q^{*_\mu j}s^{j-k}t^k}{(j-k)! \, k!}=
  \sum_{j\geq 0}\frac{Q^{*_\mu j}}{j!}\sum_{k\geq 0}\frac{j!}{(j-k)! \, k!}s^{j-k}t^k \\
 {}& =\sum_{j\geq 0}\frac{Q^{*_\mu j}(s+t)^j}{j!}=\exp_\mu((s+t)Q).
   \end{align*}

(d) From part (b) it follows that $Y(t)=\exp_\mu(tQ)$ is a solution to \eqref{Y}.
We show its uniqueness. Assume $Z(t)$ is another solution.
Then take
\begin{align*}
\frac{d}{dt}\left(Z(t)*_\mu \exp_\mu(-tQ)\right)&=\frac{d}{dt}Z(t)*_\mu \exp_\mu(-tQ)+Z(t)*_\mu \frac{d}{dt}\exp_\mu(-tQ) \\
{}&=Q*_\mu Z(t)*_\mu \exp_\mu(-tQ)-Z(t)*_\mu Q*_\mu\exp_\mu(-tQ)=0.
\end{align*}
Thus $Z(t)*_\mu \exp_\mu(-tQ)$ is independent on $t$. Consequently, since at $t=0$ we have this function equal to $\mathbb I$
it follows  that $Z(t)*_\mu \exp_\mu(-tQ)=\mathbb I$, consequently by part (c) we get $ Z(t)=\exp_\mu(tQ)$.
\end{proof}

Using \eqref{ets} one easily checks that  $\M^{[s,t]}=\exp_\mu((t-s)Q)$
satisfies \eqref{KC}.

Summarizing, we get the following.
\begin{thm} Let $\mu$ be a multiplication such that Conditions 1--4 are satisfied.
Let $Q$ be a  cubic matrix.
Then the family of matrices $\left\{\M^{[s,t]}=\exp_\mu((t-s)Q)
\right\}$, $0\leq s<t$, generates a time-homogeneous FA of type $\mu$, denoted by $\mathcal A_3^{[s,t]}$.
\end{thm}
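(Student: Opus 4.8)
The plan is to derive the Kolmogorov--Chapman equation \eqref{KC} for the family $\left\{\M^{[s,t]}=\exp_\mu((t-s)Q)\right\}$ directly from the semigroup property \eqref{ets} established in part (c) of the preceding Proposition. First I would observe that, since Conditions 1--4 hold, part (a) of that Proposition guarantees that the series \eqref{exp} converges, so that for every pair $0\leq s<t$ the cubic matrix $\M^{[s,t]}=\exp_\mu((t-s)Q)$ is well defined; its entries then serve as structural constants of a genuine $m$-dimensional algebra $\mathcal A^{[s,t]}$ over $F$, so that $\left\{\mathcal A^{[s,t]}\right\}$ is a family of algebras of the kind appearing in the definition of an FA.

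Next I would fix $0\leq s<\tau<t$ and set $s'=\tau-s$, $t'=t-\tau$, so that $s',t'\geq 0$ and $s'+t'=t-s$. Applying \eqref{ets} with these values of the parameters yields
\[\M^{[s,t]}=\exp_\mu((t-s)Q)=\exp_\mu(s'Q)*_\mu\exp_\mu(t'Q)=\exp_\mu((\tau-s)Q)*_\mu\exp_\mu((t-\tau)Q)=\M^{[s,\tau]}*_\mu\M^{[\tau,t]},\]
which is precisely \eqref{KC}. Hence $\left\{\M^{[s,t]}\right\}$ is a solution of the Kolmogorov--Chapman equation and therefore generates an FA of type $\mu$. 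Finally, since $\M^{[s,t]}$ depends on $s$ and $t$ only through $t-s$, this FA is time-homogeneous in the sense of the corresponding definition, and one may write $\M^{[t-s]}=\exp_\mu((t-s)Q)$; I would name the resulting FA $\mathcal A_3^{[s,t]}$.

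I do not expect a genuine obstacle in this argument: essentially all the substance has already been carried out in the preceding Proposition, namely convergence of the matrix-exponential series (part (a)) and, crucially, the semigroup identity (part (c)), whose proof used Condition 4 (associativity) to rearrange the Cauchy product of the two series and Condition 2 (unitality) to make sense of $Q^{*_\mu 0}={\mathbb I}$. What remains is bookkeeping: checking that the substitution $s'=\tau-s$, $t'=t-\tau$ keeps the parameters inside the range for which \eqref{ets} was proved — immediate, since \eqref{ets} was established for all real $s,t$ — and recording time-homogeneity. If any difficulty lies nearby, it is upstream of this theorem: namely, verifying for a concretely presented multiplication $*_\mu$ that the associated ACM $\mathfrak C_\mu$ genuinely satisfies Conditions 1--4, which is what licenses the application of this result.
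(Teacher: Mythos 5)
Your proposal is correct and follows exactly the paper's route: the paper's entire argument is the remark that ``using \eqref{ets} one easily checks that $\M^{[s,t]}=\exp_\mu((t-s)Q)$ satisfies \eqref{KC},'' and you simply make that substitution $s'=\tau-s$, $t'=t-\tau$ explicit and record time-homogeneity. No gap; you have just written out the details the paper leaves to the reader.
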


\textsl{Remarks on time dynamics}. To study time dynamics of FA $\mathcal A_3^{[s,t]}$ one need to compute
the matrix exponent $\exp_\mu(tQ)$. We note that even in  case of square matrices finding methods to
compute the matrix exponent is difficult, and this is still a topic of considerable current research.
But for some simple $Q$ one can compute the matrix exponent. For example, if $Q$ is nilpotent,
i.e. there exists $q$ such that $Q^{*_\mu q}=0$. Then we have
\[\exp_\mu(tQ)=\mathbb I+tQ+\frac{t^2}{2}Q^{*_\mu 2}+\dots+\frac{t^{q-1}}{(q-1)!}Q^{*_\mu (q-1)}.\]
For the corresponding $\mathcal A_3^{[s,t]}$ we see that if $Q$ is nilpotent then does not exist a
limit algebra at $t\to\infty$.\\

\subsection{Time non-homogenous FAs}
The following theorem gives an example of $m$-dimensional time non-homogenous FA.

Let $\mathbb I\in \mathfrak C_\mu$ be a unit matrix (under Condition 2). Matrix $A^{-1}\in \mathfrak C_\mu$
is called inverse of an $A\in \mathfrak C_\mu$ if
\[A*_\mu A^{-1}= A^{-1}*_\mu A=\mathbb I.\]

\begin{thm} Assume $*_\mu$ such that Condition 2 and 4 are satisfied.
Let $\{A^{[t]}, \,t\geq 0\}\subset \mathfrak C_\mu$ be a family of
invertible (for all $t$) $m^3$-matrices. Then the matrix
\[\M^{[s,t]}=A^{[s]}*_\mu(A^{[t]})^{-1},\]
generates an FA of type $\mu$, where $(A^{[t]})^{-1}$ is the inverse of $A^{[t]}$ .
\end{thm}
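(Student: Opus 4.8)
The plan is to verify directly that the proposed family $\M^{[s,t]}=A^{[s]}*_\mu(A^{[t]})^{-1}$ satisfies the Kolmogorov--Chapman equation \eqref{KC}, which by the definition of an FA of type $\mu$ is exactly what must be shown. All the structure needed is already in place: Condition~2 guarantees that $\mathfrak C_\mu$ is unital (so the notion of inverse makes sense), and Condition~4 guarantees associativity of $*_\mu$, which is what lets the middle factors cancel.

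First I would fix $0\le s<\tau<t$ and compute, using associativity of $*_\mu$,
\[
\M^{[s,\tau]}*_\mu\M^{[\tau,t]}
=\bigl(A^{[s]}*_\mu(A^{[\tau]})^{-1}\bigr)*_\mu\bigl(A^{[\tau]}*_\mu(A^{[t]})^{-1}\bigr)
=A^{[s]}*_\mu\Bigl((A^{[\tau]})^{-1}*_\mu A^{[\tau]}\Bigr)*_\mu(A^{[t]})^{-1}.
\]
Then I would invoke the defining property of the inverse, $(A^{[\tau]})^{-1}*_\mu A^{[\tau]}=\mathbb I$, together with the unit property $X*_\mu\mathbb I=X=\mathbb I*_\mu X$, to collapse this to $A^{[s]}*_\mu(A^{[t]})^{-1}=\M^{[s,t]}$. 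Hence \eqref{KC} holds for all $0\le s<\tau<t$, so the family $\{\mathcal A^{[s,t]}\}$ with structural-constant matrices $\M^{[s,t]}$ is an FA of type $\mu$.

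There is essentially no hard part: the argument is a one-line cancellation once associativity and unitality are granted, and the hypotheses are precisely Conditions~2 and~4. The only small points worth stating explicitly are that associativity is needed to regroup a product of four factors (so that the two copies of $A^{[\tau]}$ become adjacent) and that the hypothesis of invertibility \emph{for all} $t$ is what makes $\M^{[s,t]}$ well defined for every admissible pair; no convergence, no normed-space structure, and no power-associativity assumptions enter here. One could optionally remark that the resulting FA is in general time non-homogeneous, since $\M^{[s,t]}$ need not depend only on $t-s$ unless the family $\{A^{[t]}\}$ is chosen with that symmetry (e.g.\ $A^{[t]}=\exp_\mu(tQ)$, which recovers $\mathcal A_3^{[s,t]}$ as a special case).
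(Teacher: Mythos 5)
Your proof is correct and is essentially identical to the paper's: both verify the Kolmogorov--Chapman equation by a single associativity regrouping followed by the cancellation $(A^{[\tau]})^{-1}*_\mu A^{[\tau]}=\mathbb I$. Your added remarks on where Conditions~2 and~4 enter, and on time non-homogeneity, match the paper's own commentary.
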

\begin{proof}
By associativity of the multiplication $*_\mu$ of matrices we get
\[\M^{[s,\tau]}*_\mu\M^{[\tau,t]}=A^{[s]}*_\mu
\left((A^{[\tau]})^{-1}*_\mu A^{[\tau]}\right)*_\mu(A^{[t]})^{-1}=A^{[s]}*_\mu(A^{[t]})^{-1}=\M^{[s,t]}.\]
Thus $\M^{[s,t]}$ satisfies \eqref{KC}, consequently  each family (with one parameter) of invertible
cubic matrices defines an FA, denoted by $\mathcal A_4^{[s,t]}$, which is time non-homogenous, in
general. But will be a time homogenous FA, for example, if
$A^{[t]}$ is equal to $t$th power of an invertible matrix $A$.
\end{proof}

\textsl{Remarks on time dynamics}. Depending on the given family $A^{[t]}$ one can study time dynamics of
the FA $\mathcal A_4^{[s,t]}$ (see  \cite[Example 4]{CLR} for a simple case).\\

\subsection{Constructions for Maksimov's multiplications}
Denote by $\mathcal O_m$ the set of all associative binary operations on $I=\{1,2,\dots,m\}$.

\begin{defn} We say that an operation $a\in \mathcal O_m$ is uniformly distributed if
\[\sum_{l,n: a(l,n)=j} 1=m,\]
independently on $j\in I$.
\end{defn}

For example, an operation $a$ with $a(i,j)=j$,  for all $i,j\in I$, is  uniformly distributed. But $a(i,j)=1$, for all $i,j\in I$, is not uniformly distributed.

\begin{thm} Consider the Maksimov's multiplication \eqref{AB} with respect to a uniformly distributed $a$.
 Take arbitrary functions $f_i(t)$ and $g_i(t)$, $i=1,\dots m$, of $t\in \mathbb R$ such that
\begin{equation}\label{abm}
\sum_{k=1}^mf_k(t)g_k(t)=\frac{1}{m}, \ \ \text{for any} \ \ t\in \mathbb R.
\end{equation}
Then the family of cubic matrices $\mathcal M^{[s,t]}=\left(f_i(s)g_k(t)\right)_{i,j,k=1}^m$ generates an FA of type $a$, denoted by $\mathcal A^{[s,t]}_5$.

\emph{Notice that the entries of these cubic matrices do not depend on the middle index $j$.}
\end{thm}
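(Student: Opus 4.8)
The plan is to verify directly that the family $\mathcal M^{[s,t]}=\left(f_i(s)g_k(t)\right)_{i,j,k=1}^m$ satisfies the Kolmogorov--Chapman equation \eqref{KC} for the Maksimov multiplication $*_a$ given by \eqref{AB}; once this is done, the family automatically defines an FA of type $a$ by the very definition of an FA. So there is nothing to prove beyond checking $\mathcal M^{[s,t]}=\mathcal M^{[s,\tau]}*_a\mathcal M^{[\tau,t]}$ for all $0\le s<\tau<t$.

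First I would fix such $s,\tau,t$ and set $A=\mathcal M^{[s,\tau]}=(a_{ilk})$ with $a_{ilk}=f_i(s)g_k(\tau)$ and $B=\mathcal M^{[\tau,t]}=(b_{knr})$ with $b_{knr}=f_k(\tau)g_r(t)$. Substituting into \eqref{AB}, the entry $(A*_a B)_{ijr}$ equals $\sum_{l,n:\,a(l,n)=j}\sum_k a_{ilk}b_{knr}$. The key observation is that the inner sum over $k$ factorizes, since the entries of $A$ and $B$ do not depend on their middle indices:
\[
\sum_k a_{ilk}b_{knr}=\sum_{k=1}^m f_i(s)g_k(\tau)f_k(\tau)g_r(t)=f_i(s)g_r(t)\sum_{k=1}^m f_k(\tau)g_k(\tau),
\]
and by the normalization hypothesis \eqref{abm} this equals $\tfrac1m f_i(s)g_r(t)$. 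In particular it does not depend on $l$ or $n$.

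Hence $(A*_a B)_{ijr}=\tfrac1m f_i(s)g_r(t)\cdot\big(\sum_{l,n:\,a(l,n)=j}1\big)$, and now I would invoke the hypothesis that $a$ is uniformly distributed, i.e. $\sum_{l,n:\,a(l,n)=j}1=m$ for every $j\in I$, to conclude $(A*_a B)_{ijr}=f_i(s)g_r(t)=\big(\mathcal M^{[s,t]}\big)_{ijr}$. Since $i,j,r$ are arbitrary, this is exactly \eqref{KC}, and the family generates an FA of type $a$.

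As for obstacles: there is essentially no technical difficulty here — the entire argument is one short computation. The only point deserving attention is that the two hypotheses must be used in precisely the right places: condition \eqref{abm} is what makes the inner $k$-sum collapse to the constant $1/m$ independently of the summation indices $l,n$, and uniform distribution of $a$ is what cancels that $1/m$ against the number of preimages of $j$. If either hypothesis were dropped, the right-hand side of the KCE would acquire either a spurious dependence on $j$ or an extra scalar factor, so both are genuinely needed; I would also note in passing that the middle index $j$ never enters the $k$-sum (because of the product structure of the entries) and only shows up through the counting $\sum_{l,n:\,a(l,n)=j}1$, which is why the construction forces the matrices $\mathcal M^{[s,t]}$ to be independent of $j$.
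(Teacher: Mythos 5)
Your proof is correct and follows essentially the same route as the paper's: substitute the product-form entries into \eqref{AB}, factor out $f_i(s)g_r(t)$, collapse the inner $k$-sum to $1/m$ via \eqref{abm}, and cancel this against the $m$ preimages guaranteed by uniform distribution of $a$. The only cosmetic difference is that the paper phrases the final step as showing a bracketed factor vanishes, whereas you verify the identity directly.
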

\begin{proof} Let $\mathcal M^{[s,t]}=\left(f_i(s)g_k(t)\right)_{i,j,k=1}^m$ then the equation \eqref{KC} has the form
\begin{equation}\label{aMM}
 f_i(s)g_r(t)=\sum_{l,n: \, a(l,n)=j}\sum_k  f_i(s)g_k(\tau)f_k(\tau)g_r(t).
\end{equation}
Simplify the system \eqref{aMM} to get
\[
 f_i(s)g_r(t)\left[\sum_{l,n: \, a(l,n)=j}\sum_k  f_k(\tau)g_k(\tau)-1\right]=0.
\]
By conditions of the theorem we have
\[\sum_{l,n: \, a(l,n)=j}\sum_k  f_k(\tau)g_k(\tau)-1=0, \ \ \text{for all} \ \ \tau\in \mathbb R.\]
This completes the proof.
\end{proof}

Now we construct a very rich class of FAs of type $a_0$, where $a_0$ is an operation
in $\mathcal O_m$ such that $a_0(j,n)=j$ for any $j,n\in I$. Note that this operation is uniformly distributed.
Then the corresponding Maksimov's multiplication
for arbitrary cubic matrices
\[A=(a_{ijk})_{i,j,k=1}^m, \ \ B=(b_{ijk})_{i,j,k=1}^m, \ \
C=(c_{ijk})_{i,j,k=1}^m,\] is $C=A*_{a_0}B$ where
  \begin{equation}\label{D}
  c_{ijr}=\sum_{k,n=1}^ma_{ijk}b_{knr}.
  \end{equation}

 \begin{thm} Let $g_i(t)$ and $\gamma_{ij}(t)$, $i,j\in I$, be arbitrary functions of
 $t\in \mathbb R$ such that $g_i(t)\ne 0$ and
\begin{equation}\label{gam}
m\sum_{j=1}^m\gamma_{ij}(s)=g_i(s), \ \ \text{for all} \ \, i\in I,  \ s\in \mathbb R.
\end{equation}
Then the cubic matrix
\[\M^{[s,t]}=\left(\frac{\gamma_{ij}(s)}{g_r(t)}\right)_{i,j,r=1}^m\]
generates an FA of type $a_0$ denoted by $\mathcal A^{[s,t]}_6$.
\end{thm}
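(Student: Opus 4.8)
The plan is to substitute the candidate matrix $\M^{[s,t]}=\bigl(\gamma_{ij}(s)/g_r(t)\bigr)_{i,j,r=1}^m$ directly into the Kolmogorov–Chapman equation \eqref{KC} for the multiplication $*_{a_0}$ and verify it holds. Since $a_0(j,n)=j$ for all $j,n\in I$, the multiplication rule is given by \eqref{D}, namely $(A*_{a_0}B)_{ijr}=\sum_{k,n=1}^m a_{ijk}b_{knr}$. So I would first write out the $(i,j,r)$-entry of $\M^{[s,\tau]}*_{a_0}\M^{[\tau,t]}$: taking $a_{ijk}=\gamma_{ij}(s)/g_k(\tau)$ and $b_{knr}=\gamma_{kn}(\tau)/g_r(t)$, this entry equals
\[
\sum_{k,n=1}^m \frac{\gamma_{ij}(s)}{g_k(\tau)}\cdot\frac{\gamma_{kn}(\tau)}{g_r(t)}
=\frac{\gamma_{ij}(s)}{g_r(t)}\sum_{k=1}^m\frac{1}{g_k(\tau)}\sum_{n=1}^m\gamma_{kn}(\tau).
\]
Here the hypothesis $g_k(t)\neq 0$ is what makes the expression well-defined.

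Next I would invoke the constraint \eqref{gam}, which says $m\sum_{n=1}^m\gamma_{kn}(\tau)=g_k(\tau)$, hence $\sum_{n=1}^m\gamma_{kn}(\tau)=g_k(\tau)/m$. Plugging this in, the inner sum becomes
\[
\sum_{k=1}^m\frac{1}{g_k(\tau)}\cdot\frac{g_k(\tau)}{m}=\sum_{k=1}^m\frac{1}{m}=1,
\]
so the $(i,j,r)$-entry of $\M^{[s,\tau]}*_{a_0}\M^{[\tau,t]}$ is exactly $\gamma_{ij}(s)/g_r(t)$, which is the $(i,j,r)$-entry of $\M^{[s,t]}$. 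This holds for every choice of $0\le s<\tau<t$, so \eqref{KC} is satisfied and the family $\{\M^{[s,t]}\}$ generates an FA of type $a_0$.

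There is no real obstacle here: the argument is a direct computation of the same flavor as the proof of the previous theorem for $\mathcal A_5^{[s,t]}$. The only points requiring a word of care are that the division by $g_k(\tau)$ and $g_r(t)$ is legitimate precisely because of the nonvanishing hypothesis $g_i(t)\neq 0$, and that the normalization \eqref{gam} is exactly calibrated so that the $\tau$-dependent factor collapses to $1$ independently of the indices $i,k$ and of $\tau$ itself — it is this independence that is the crux, mirroring how \eqref{abm} forced the bracketed term to vanish in the proof of the $\mathcal A_5$ theorem. I would close by remarking that, as with $\mathcal A_5^{[s,t]}$, the entries of $\M^{[s,t]}$ do not depend on the middle index $j$ only through $\gamma_{ij}$, so the flow is genuinely $m$-dimensional and, for generic choices of $g_i$ and $\gamma_{ij}$, time non-homogeneous.
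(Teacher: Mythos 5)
Your verification is correct: the substitution, the factorization of the double sum over $k,n$, and the use of \eqref{gam} to collapse the $\tau$-dependent factor to $1$ are all sound, and the nonvanishing hypothesis on the $g_i$ is invoked exactly where it is needed. However, your route is genuinely different from the paper's. You prove the theorem by direct verification: plug the ansatz into \eqref{KC} for $*_{a_0}$ and check equality entrywise. The paper instead \emph{derives} the form of the solution: it introduces the marginal sums $f_{ir}(s,t)=\sum_{j}M^{[s,t]}_{ijr}$, observes that the cubic Kolmogorov--Chapman equation \eqref{es1} forces these marginals to satisfy the ordinary square-matrix Kolmogorov--Chapman equation \eqref{es2}, solves that with the rank-one ansatz $f_{ij}(s,t)=g_i(s)/(m\,g_j(t))$, and then back-substitutes to conclude that $M^{[s,t]}_{ijr}g_r(t)$ must be independent of $t$ and $r$, which yields the stated form and the constraint \eqref{gam}. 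The paper's argument explains where the ansatz comes from and suggests how one might hunt for further solutions via marginals, but as a proof of the stated implication it is somewhat looser (the step ``it is clear that the quantity should not depend on $t$ and $r$'' is really part of a heuristic derivation rather than a verification). Your direct computation is the cleaner and more self-contained proof of the theorem as stated; what you lose is only the motivation for the particular form of $\M^{[s,t]}$.
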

\begin{proof}
  For $*_{a_0}$, using \eqref{D}, the equation \eqref{KC} can be written as
   \begin{equation}\label{es1}
M_{ijr}^{[s,t]}=\sum_{k,n=1}^m M^{[s,\tau]}_{ijk}M_{knr}^{[\tau,t]}, \ \ \text{for all} \ i,j,r\in I.
\end{equation}
Denote
\begin{equation}\label{fi}
f_{ir}(s,t)=\sum_{j=1}^mM_{ijr}^{[s,t]}.
\end{equation}
Then from \eqref{es1} we get
\begin{equation}\label{es2}
f_{ir}(s,t)=\sum_{k=1}^mf_{ik}(s,\tau)f_{kr}(\tau,t), \ \ \text{for all} \ i,r\in I.
\end{equation}
Consider arbitrary functions $g_i(t)$, $i=1,\dots,m$, such that $g_i(t)\ne 0$.
Then it is easy to see that the system of functional equations \eqref{es2} has the following solution
\[f_{ij}(s,t)=\frac{g_i(s)}{m \, g_j(t)}.\]
Using this equality, by \eqref{fi} and \eqref{es1}, we get
\[ M_{ijr}^{[s,t]}g_r(t)=\frac{1}{m}\sum_{k=1}^m M^{[s,\tau]}_{ijk}g_{k}(\tau), \ \ \text{for all} \ i,j,r\in I.\]
From this equality it is clear that the quantity $ M_{ijr}^{[s,t]}g_r(t)$ should not depend on $t$ and $r$, i.e.
 $M_{ijr}^{[s,t]}g_r(t)=\gamma_{ij}(s)$, for some function $\gamma_{ij}(s)$. Thus
 \[M_{ijr}^{[s,t]}=\frac{\gamma_{ij}(s)}{g_r(t)},  \ \ \text{for all} \  i,j,r\in I.\]
Now the equality \eqref{fi} gives the condition \eqref{gam} on $\gamma_{ij}$.
\end{proof}

\textsl{Remarks on time dynamics}. In general the behavior of $\mathcal A^{[s,t]}_5$ depends on given functions $f_i(t)$ and $g_i(t)$. One can choose these function suitable to an expected property of the dynamics. Here we consider one example to make  $\mathcal A^{[s,t]}_5$ a periodic FA. Consider
\[f_k(t)=2+\sin(kt),\ \ \ g_k(t)=\frac{1}{m^2(2+\sin(kt))}, \  k=1,\dots,m.\] Then the condition \eqref{abm} is satisfied.
Consequently, the matrix
\[\M^{[s,t]}=\left(\frac{2+\sin(is)}{m^2\big(2+\sin(kt)\big)}\right)_{i,j,k=1}^m\]
generates a periodic FA $\mathcal A^{[s,t]}_5$.

The time behavior of $\mathcal A_6^{[s,t]}$ (is similar to the behavior of $\mathcal A^{[s,t]}_5$)
depends on its parameter-functions $g_i(t)$ and $\gamma_{ij}(t)$, $i,j\in I$. \\

Let $S_m$ be the group of permutations on $I$.

Take $a\in \mathcal O_m$ and define an action of $\pi\in S_m$ on $a$ (denoted by $\pi a$) as
\[\pi a(i,j)=\pi a(\pi^{-1}(i),\pi^{-1}(j)), \qquad \text{for all} \ \ i,j\in I.\]

\begin{thm} Let $a\in \mathcal O_m$ and $*_a$ be a Maksimov's multiplication (see \eqref{AB}).
If there exists a permutation $\pi \in S_m$ such that $\pi b=a$,  that is
\begin{equation}\label{abp}
a(j,n)=\pi^{-1} (b(\pi(j),\pi(n))), \qquad \text{for all} \ \ j,n\in I,
\end{equation}
then there is an FA of type $a$ iff there is an FA of type $b$.
\end{thm}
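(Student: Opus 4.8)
The plan is to realize the two ACMs $\mathfrak C_a$ and $\mathfrak C_b$ as isomorphic algebras via an explicit isomorphism induced by the permutation $\pi$, and then to transport solutions of the Kolmogorov--Chapman equation \eqref{KC} along this isomorphism.

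First I would define a linear map $\Phi_\pi\colon \mathfrak C\to\mathfrak C$ on the basis cubic matrices by $\Phi_\pi(E_{ijk})=E_{\pi(i)\pi(j)\pi(k)}$ and extend it by linearity; equivalently, $\big(\Phi_\pi(A)\big)_{ijk}=a_{\pi^{-1}(i)\pi^{-1}(j)\pi^{-1}(k)}$ for $A=(a_{ijk})\in\mathfrak C$. Since $\pi$ permutes $I$, the map $\Phi_\pi$ just permutes the basis $\{E_{ijk}\}$ of $\mathfrak C$, hence it is a linear bijection with inverse $\Phi_{\pi^{-1}}$, and it preserves the size (dimension $m$) of the cubic matrices.

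Next I would check that $\Phi_\pi$ is a homomorphism from $\mathfrak C_a$ to $\mathfrak C_b$, i.e. $\Phi_\pi(A*_a B)=\Phi_\pi(A)*_b\Phi_\pi(B)$ for all $A,B$. By bilinearity it suffices to verify this on basis matrices. Using \eqref{ma} one has $\Phi_\pi(E_{ijk}*_a E_{lnr})=\delta_{kl}E_{\pi(i)\,\pi(a(j,n))\,\pi(r)}$, while $\Phi_\pi(E_{ijk})*_b\Phi_\pi(E_{lnr})=E_{\pi(i)\pi(j)\pi(k)}*_b E_{\pi(l)\pi(n)\pi(r)}=\delta_{\pi(k)\pi(l)}E_{\pi(i)\,b(\pi(j),\pi(n))\,\pi(r)}$. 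Since $\pi$ is injective, $\delta_{\pi(k)\pi(l)}=\delta_{kl}$, and applying $\pi$ to \eqref{abp} gives $\pi(a(j,n))=b(\pi(j),\pi(n))$; hence the two sides coincide. Thus $\Phi_\pi\colon\mathfrak C_a\to\mathfrak C_b$ is an algebra isomorphism, and $\Phi_{\pi^{-1}}$ is the analogous isomorphism $\mathfrak C_b\to\mathfrak C_a$, because \eqref{abp} for the triple $(a,b,\pi)$ is equivalent to the corresponding identity $b(j,n)=\pi\big(a(\pi^{-1}(j),\pi^{-1}(n))\big)$ for $(b,a,\pi^{-1})$.

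Finally I would transport FAs. If $\{\mathcal A^{[s,t]}\}$ is an FA of type $a$ with structural-constant matrices $\mathcal M^{[s,t]}$ satisfying $\mathcal M^{[s,t]}=\mathcal M^{[s,\tau]}*_a\mathcal M^{[\tau,t]}$, then applying $\Phi_\pi$ and using the homomorphism property, the matrices $\widetilde{\mathcal M}^{[s,t]}\coloneqq\Phi_\pi(\mathcal M^{[s,t]})$ satisfy $\widetilde{\mathcal M}^{[s,t]}=\widetilde{\mathcal M}^{[s,\tau]}*_b\widetilde{\mathcal M}^{[\tau,t]}$, i.e. they define an FA of type $b$ (still of dimension $m$). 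The reverse implication follows by the same argument applied with $\Phi_{\pi^{-1}}$. The only point needing care is the index bookkeeping — keeping straight whether $\pi$ or $\pi^{-1}$ occurs and confirming that the Kronecker factor $\delta_{kl}$ survives the relabeling — but this is routine once everything is reduced to the single identity $\pi(a(j,n))=b(\pi(j),\pi(n))$, so I do not anticipate a genuine obstacle.
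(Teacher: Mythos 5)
Your proof is correct and is essentially the same argument as the paper's: the paper directly verifies in coordinates that $\tilde M^{[s,t]}_{ijr}= M^{[s,t]}_{\pi^{-1}(i)\pi^{-1}(j)\pi^{-1}(r)}$ solves \eqref{KC} for $*_b$, which is exactly your $\Phi_\pi(\mathcal M^{[s,t]})$. You merely package the index relabeling as an algebra isomorphism $\Phi_\pi\colon\mathfrak C_a\to\mathfrak C_b$ before transporting solutions, which is a clean (and equivalent) way to organize the same computation.
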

\begin{proof} Assume there is an FA of type $a$, i.e. the equation \eqref{KC} has a solution, say $\M^{[s,t]}=\left(M^{[s,t]}_{ijr}\right)$,
with respect to the multiplication $*_a$. That is
\begin{equation}\label{MM}
 M^{[s,t]}_{ijr}=\sum_{l,n: \, a(l,n)=j}\sum_k  M^{[s,\tau]}_{ilk} M^{[\tau,t]}_{knr}.
\end{equation}
Denote $\pi(i)=i'$, then using \eqref{abp} from \eqref{MM} we get
\[
 M^{[s,t]}_{\pi^{-1}(i')\pi^{-1}(j')\pi^{-1}(r')}=\sum_{l,n: \, b(l',n')=j'}\sum_k  M^{[s,\tau]}_{\pi^{-1}(i')\pi^{-1}(l')\pi^{-1}(k')} M^{[\tau,t]}_{\pi^{-1}(k')\pi^{-1}(n')\pi^{-1}(r')}.
\]
Thus $\tilde\M^{[s,t]}=\left(\tilde M^{[s,t]}_{ijr}\right)$ with
$\tilde M^{[s,t]}_{ijr}= M^{[s,t]}_{\pi^{-1}(i)\pi^{-1}(j)\pi^{-1}(r)}$ is a solution of the equation \eqref{KC}
with respect to the multiplication $*_b$. This completes the proof.
\end{proof}

\subsection{Multiplication of solutions of the equation (\ref{KC})}

In this subsection we assume the following

\textsl{Condition 5}. Assume $\mathfrak C_\mu$ is a commutative algebra.

\begin{thm}\label{tk} Assume $*_\mu$ such that Condition 4 and 5 are satisfied.
Let $\{\M^{[s,t]}\}, \{\mathcal N^{[s,t]}\}\subset \mathfrak C_\mu$ be two families of
$m^3$-matrices which generate two FAs of type $\mu$. Then the matrix
\[\mathcal B^{[s,t]}=\M^{[s,t]}*_\mu \mathcal N^{[s,t]},\]
generates an FA of type $\mu$.
\end{thm}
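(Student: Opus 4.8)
The plan is to verify that $\mathcal B^{[s,t]}=\M^{[s,t]}*_\mu\mathcal N^{[s,t]}$ satisfies the Kolmogorov--Chapman equation \eqref{KC} directly, using the fact that both $\{\M^{[s,t]}\}$ and $\{\mathcal N^{[s,t]}\}$ already do, together with the associativity (Condition 4) and commutativity (Condition 5) of the ACM $\mathfrak C_\mu$. First I would fix $0\le s<\tau<t$ and expand
\[
\mathcal B^{[s,\tau]}*_\mu\mathcal B^{[\tau,t]}
=\left(\M^{[s,\tau]}*_\mu\mathcal N^{[s,\tau]}\right)*_\mu\left(\M^{[\tau,t]}*_\mu\mathcal N^{[\tau,t]}\right).
\]
Then I would use associativity and commutativity to regroup the four factors so that the two $\M$-factors are adjacent and the two $\mathcal N$-factors are adjacent, namely rewriting the right-hand side as $\left(\M^{[s,\tau]}*_\mu\M^{[\tau,t]}\right)*_\mu\left(\mathcal N^{[s,\tau]}*_\mu\mathcal N^{[\tau,t]}\right)$. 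Since in a commutative associative algebra every permutation of a finite product gives the same element, this rearrangement is legitimate; concretely one swaps the middle two factors $\mathcal N^{[s,\tau]}$ and $\M^{[\tau,t]}$ and reassociates.

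Having regrouped, I would apply the hypothesis that each family solves \eqref{KC}: $\M^{[s,\tau]}*_\mu\M^{[\tau,t]}=\M^{[s,t]}$ and $\mathcal N^{[s,\tau]}*_\mu\mathcal N^{[\tau,t]}=\mathcal N^{[s,t]}$. Substituting these gives
\[
\mathcal B^{[s,\tau]}*_\mu\mathcal B^{[\tau,t]}=\M^{[s,t]}*_\mu\mathcal N^{[s,t]}=\mathcal B^{[s,t]},
\]
which is exactly \eqref{KC} for $\mathcal B$. It remains only to observe that $\mathcal B^{[s,t]}$, being an $m^3$-entry cubic matrix, is the matrix of structural constants of an $m$-dimensional algebra $\mathcal A^{[s,t]}$ for each pair $(s,t)$, so the family $\{\mathcal B^{[s,t]}\}$ indeed generates an FA of type $\mu$; I would denote it $\mathcal A_7^{[s,t]}$ to match the running enumeration.

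The only real subtlety — and it is minor — is making sure both Condition 4 and Condition 5 are genuinely needed and correctly invoked: associativity alone lets one reassociate but not reorder the factors, while commutativity is what permits the crucial interchange of $\mathcal N^{[s,\tau]}$ with $\M^{[\tau,t]}$; neither hypothesis can be dropped. Beyond that the argument is a routine rearrangement, so I do not anticipate any serious obstacle. One could add a remark that the construction iterates, so finite $*_\mu$-products of solutions of \eqref{KC} are again solutions, and that the time dynamics of $\mathcal A_7^{[s,t]}$ is governed jointly by those of the two factor FAs.
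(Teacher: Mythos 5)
Your proposal is correct and follows exactly the paper's own argument: expand $\mathcal B^{[s,\tau]}*_\mu\mathcal B^{[\tau,t]}$, use associativity and commutativity to swap the middle two factors and regroup the $\M$-factors and $\mathcal N$-factors, then apply the Kolmogorov--Chapman equation for each family. Your closing remark about iterating the construction to finite products is precisely the generalization the paper records in the remark following the theorem.
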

\begin{proof}
By associativity and commutativity of the multiplication $*_\mu$ of matrices we get
\begin{align*}
\mathcal B^{[s,\tau]}*_\mu \mathcal B^{[\tau,t]}&=\left(\M^{[s,\tau]}*_\mu \mathcal N^{[s,\tau]}\right)*_\mu \left(\M^{[\tau,t]}*_\mu \mathcal N^{[\tau,t]}\right)\\
{}&=\left(\M^{[s,\tau]}*_\mu \M^{[\tau,t]} \right)*_\mu \left(\mathcal N^{[s,\tau]}*_\mu \mathcal N^{[\tau,t]}\right)
=\M^{[s,t]}*_\mu \mathcal N^{[s,t]}=\mathcal B^{[s,t]}.
\end{align*}
\end{proof}

\begin{rk} We note that Theorem~\ref{tk} can be generalized, i.e. under conditions 4 and 5,
let $\{\M_i^{[s,t]}\}\subset \mathfrak C_\mu$, $i=1,\dots,k$, be $k$ families of
$m^3$-matrices which generate $k$ FAs of type $\mu$. Then the matrix
\[\mathcal B^{[s,t]}=\M_1^{[s,t]}*_\mu \M_2^{[s,t]}*_\mu\dots *_\mu \M_k^{[s,t]},\]
generates an FA of type $\mu$.

This formula is useful to construct new FAs by known ones (using above mentioned examples of FAs and other examples given in \cite{LR}).
\end{rk}
\begin{rk} To check our Condition 1--5 for an algebra $\mathfrak C_\mu$ one has to check known conditions (\cite{Ja}) on $\mu=\left(C_{ijk,lnr}^{uvw}\right)$ of the matrix of structural constants. Indeed, one can numerate the elements of the set $\{ijk: i,j,k=1,\dots, m\}$ to write it in the form $\mathcal J=\{1,2,\dots, m^3\}$ then the matrix $\mu$ can be written as usual form: $\mu=(c_{ij}^k)$ where $i,j,k\in \mathcal J$. Then the following conditions are known for an algebra with matrix $\mu$:
\begin{itemize}
\item{\rm Commutative iff}:
\[c_{ij}^k=c_{ji}^k, \ \ \text{for all} \  i,j,k\in \mathcal J.\]

\item{\rm Associative iff}:
\[\sum_{r=1}^nc_{ij}^rc_{rk}^l=\sum_{r=1}^nc_{ir}^lc_{jk}^r, \ \ \text{for all} \  i,j,k,l\in \mathcal J.
\]

\item{\rm Existence of an idempotent element:} this problem is equivalent to the
existence of a fixed point of the map $V \colon  \mathbb R^{m^3}\longrightarrow \mathbb R^{m^3}$, $x \mapsto V(x)= x'$, given by
\[V: x_k'=\sum_{i,j=1}^{m^3}c_{ij}^kx_ix_j, \quad k=1,\dots,m^3.\]
For example, if $\mu$ is a stochastic cubic matrix in  the sense that
\[c_{ij}^k\geq 0, \ \ \sum_{k=1}^{m^3} c_{ij}^k=1, \ \ \text{for all} \ \ i,j,\]
then from known theorems about fixed points it follows that the corresponding operator $V$
has at least one fixed point (i.e. the algebra $\mathfrak C_\mu$ has at least one idempotent element).
 \end{itemize}
\end{rk}
Let us give an example of ACM for which all above mentioned conditions
can be easily checked.

{\bf Example.} Let $\alpha:\mathcal J\times \mathcal J\to \mathcal J$ be a binary operation
on $\mathcal J=\{1,2,\dots, m^3\}\equiv \{ijk: i,j,k=1,\dots,m\}$
and assume that $(\mathcal J, \alpha)$ is a group with the operation $\alpha$, i.e.,
the operation satisfies axioms: associativity, has identity element (denoted by $i_0j_0k_0$)
and each element has an inverse (for each $ijk$ its inverse is denoted by $\overline{ijk}$).

Define  the following multiplication for basis matrices $E_{ijk}$:
\begin{equation}\label{alma}
 E_{ijk}*_\alpha E_{lnr}=E_{\alpha(ijk, lnr)}.
 \end{equation}
Since $(\mathcal J,\alpha)$ is a group it is easy to see that the ACM $\mathfrak C_\alpha$
is associative, with unit matrix $\mathbb I=E_{i_0j_0k_0}$ and each basis element $E_{ijk}$ has
its inverse denoted by $E_{\overline {ijk}}$. Indeed, since $\alpha(ijk, \overline{ijk})=i_0j_0k_0$
 we have
$$E_{ijk}*_\alpha E_{\overline {ijk}}=E_{\alpha(ijk, \overline{lnr})}=E_{i_0j_0k_0}.$$

 Moreover, if the group is commutative then
the algebra $\mathfrak C_\alpha$ is also commutative.

\section{Differential equations for flows of algebras}\label{Sd}

For a continuous-time Markov process, Kolmogorov  derived  forward equations and backward equations, which are a pair of systems of differential equations that describe the time-evolution of the probability transition probabilities $P_{ij}^{[s,t]}$ giving the process \cite{Ko}. For quadratic stochastic processes   derived partial differential
equations with delaying argument were derived. These equations then were used to describe some processes (see \cite{MG}).

In this section we shall derive partial differential equations for the matrices $\mathcal M^{[s,t]}$.

Let $\mathcal M^{[s,t]}=\left(M_{ijk}^{[s,t]}\right)_{i,j,k=1}^m$ generate an FA of type $\mu$. Take a small $h>0$ such that
$s+h<\tau<t$ and using the equation \eqref{KC}, we write
\[\mathcal M^{[s+h,t]}-\M^{[s,t]}=\M^{[s+h,\tau]}*_\mu \M^{[\tau,t]}-\M^{[s,\tau]}*_\mu \M^{[\tau,t]}=\left(\M^{[s+h,\tau]}-\M^{[s,\tau]}\right)*_\mu \M^{[\tau,t]}.\]
Dividing this expression by $h$ and assuming the existence of the limits we obtain
\[\lim_{h\to 0}\frac{\mathcal M^{[s+h,t]}-\M^{[s,t]}}{h}=\lim_{h\to 0}\frac{\M^{[s+h,\tau]}-\M^{[s,\tau]}}{h}*_\mu \M^{[\tau,t]},\]
i.e.
\begin{equation}\label{des}
\frac{\partial}{\partial s}\M^{[s,t]}=\left(\frac{\partial}{\partial s}\M^{[s,\tau]}\right)*_\mu \M^{[\tau,t]},
\end{equation}
here $\frac{\partial}{\partial s}\M^{[s,t]}=\left(\frac{\partial}{\partial s}M_{ijk}^{[s,t]}\right)_{i,j,k=1}^m$.

Similarly with respect $t$ we get
\begin{equation}\label{det}
\frac{\partial}{\partial t}\M^{[s,t]}=\M^{[s,\tau]}*_\mu\left(\frac{\partial}{\partial t}\M^{[\tau,t]}\right).
\end{equation}

Summarize this in the following theorem.
\begin{thm}
If $\M^{[s,t]}$ generates an FA of type $\mu$ then it satisfies the partial differential equations \eqref{des} and \eqref{det}.
\end{thm}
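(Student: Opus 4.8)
The plan is to imitate the classical derivation of Kolmogorov's forward and backward equations for Markov transition functions, but now for the cubic matrices $\M^{[s,t]}$ and with the product $*_\mu$ in place of ordinary matrix multiplication. Note that for this last theorem we need \emph{no} structural hypotheses on $\mu$ (no associativity, unitality, etc.): the only inputs are the Kolmogorov--Chapman equation \eqref{KC} and the assumed differentiability of $s\mapsto \M^{[s,t]}$ and $t\mapsto \M^{[s,t]}$. So the proof is essentially a finite-difference computation together with one appeal to bilinearity of $*_\mu$.

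First I would fix $s<\tau<t$ and a small $h>0$ with $s+h<\tau$. Applying \eqref{KC} to the pairs $(s+h,t)$ and $(s,t)$, both decomposed at the intermediate time $\tau$, gives
\[
\M^{[s+h,t]}-\M^{[s,t]}=\M^{[s+h,\tau]}*_\mu\M^{[\tau,t]}-\M^{[s,\tau]}*_\mu\M^{[\tau,t]}
=\bigl(\M^{[s+h,\tau]}-\M^{[s,\tau]}\bigr)*_\mu\M^{[\tau,t]},
\]
where the last step uses that $*_\mu$ is bilinear, hence distributive over subtraction in the first argument. Dividing by $h$, letting $h\to 0$, and using that $*_\mu$ (being bilinear on the finite-dimensional space $\mathfrak C$) is continuous, so the limit may be passed inside the first slot of $*_\mu$, yields the forward equation \eqref{des}. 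The backward equation \eqref{det} is obtained symmetrically: fix $s<\tau<t$, take small $h>0$ with $\tau<t-h$ (or $t+h$, the sign bookkeeping is routine), decompose $\M^{[s,t+h]}$ and $\M^{[s,t]}$ at $\tau$, factor out $\M^{[s,\tau]}$ on the left using bilinearity in the second argument, divide by $h$ and pass to the limit.

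The only point requiring any care is the interchange of the limit with $*_\mu$. Since $\mathfrak C\cong F^{m^3}$ is finite-dimensional and $*_\mu$ is bilinear, each entry of $A*_\mu B$ is a bilinear (hence jointly continuous) function of the entries of $A$ and $B$; componentwise this is just $\bigl(\frac{\partial}{\partial s}\M^{[s,\tau]}*_\mu\M^{[\tau,t]}\bigr)_{uvw}=\sum C^{uvw}_{\cdots}\bigl(\frac{\partial}{\partial s}M^{[s,\tau]}_{\cdots}\bigr)M^{[\tau,t]}_{\cdots}$, and the difference quotients converge entrywise by the hypothesis that $M^{[s,t]}_{ijk}$ is differentiable in $s$. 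I expect no real obstacle here — the theorem is a formal consequence of \eqref{KC} plus bilinearity and the assumed differentiability; the statement itself already flags ``assuming the existence of the limits,'' so the differentiability is a standing assumption rather than something to be proved. I would simply present the two finite-difference identities, the division and passage to the limit, and note the entrywise reading of \eqref{des}--\eqref{det}.
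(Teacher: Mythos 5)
Your proposal is correct and follows essentially the same route as the paper: decompose $\M^{[s+h,t]}$ and $\M^{[s,t]}$ at the intermediate time $\tau$ via \eqref{KC}, factor the difference using bilinearity of $*_\mu$, divide by $h$ and pass to the limit, and argue symmetrically in $t$. Your added remark justifying the interchange of the limit with $*_\mu$ via continuity of a bilinear map on a finite-dimensional space is a small but welcome refinement of the paper's ``assuming the existence of the limits.''
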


\begin{rk} The equation \eqref{des} is called forward equation and \eqref{det} is called backward equation. We note that
the equations \eqref{Y} are particular cases of these equations. As it was shown above the solution of the equation \eqref{Y} gives only time-homogenous FAs.
 Each matrix of the FAs mentioned in the previous section satisfy equations \eqref{des}, \eqref{det}.
\end{rk}

\section*{ Acknowledgements}

 This work was partially supported  was supported by Agencia Estatal de Investigaci\'on (Spain), grant MTM2016-
79661-P (European FEDER support included, UE) and by Kazakhstan Ministry of Education and Science, grant 0828/GF4: ``Algebras, close to Lie: cohomologies, identities and deformations''.

\end{document}